\newtheorem{theorem}{Theorem}
\newtheorem*{theorem*}{Theorem}
\newtheorem{lemma}[theorem]{Lemma}
\newtheorem{corollary}[theorem]{Corollary}
\theoremstyle{definition}
\DeclareMathOperator{\mex}{mex}
\begin{document}

\title{On Variations of Nim and Chomp}

\author{June Ahn \and Benjamin Chen \and Richard Chen \and Ezra Erives \and
Jeremy Fleming \and Michael Gerovitch \and Tejas Gopalakrishna \and Tanya Khovanova \and
Neil Malur \and 
Nastia Polina \and
Poonam Sahoo}
\date{}

\maketitle

\begin{abstract}
We study two variations of Nim and Chomp which we call \textit{Monotonic Nim} and \textit{Diet Chomp}. In Monotonic Nim the moves are the same as in Nim, but the positions are non-decreasing numbers as in Chomp. Diet-Chomp is a variation of Chomp, where the total number of squares removed is limited.
\end{abstract}

\section{Introduction}

We study finite impartial games with two players where the same moves are available to both players. Players alternate moves. In a \textit{normal} play, the person who does not have a move loses. In a \textit{mis\`{e}re} play, the person who makes the last move loses.

A \emph{P-position} is a position from which the \emph{previous} player wins, assuming perfect play. We can observe that all terminal positions are P-positions. An \emph{N-position} is a position from which the \emph{next} player wins given perfect play. When we play we want to end our move with a P-position and want to see an N-position before our move.

Every impartial game is equivalent to a Nim heap of a certain size. Thus, every game can be assigned a non-negative integer, called \textit{a nimber}, \textit{nim-value}, or \textit{a Grundy number}.

The game of Nim is played on several heaps of tokens. A move consists of taking some tokens from one of the heaps.

The game of Chomp is played on a rectangular $m$ by $n$ chocolate bar with grid lines dividing the bar into $mn$ squares. A move consists of chomping a square out of the chocolate bar along with all the squares to the right and above. The player eats the chomped squares. Players alternate moves.
The lower left square is poisoned and the player forced to eat it dies and loses.

The game of Chomp is a mis\`{e}re game. The normal game is not interesting as the first player can just eat the whole bar and win.

The game of Chomp is not completely solved \cite{Z}, but the first player wins (in a non-trivial game when $mn > 1$). This can be proven by a \textit{strategy-stealing argument}. Suppose that the second player has a winning strategy. Suppose that in the first move the first player takes only the top right square. By our assumption, the second player has a winning response to this. But such a response is a legal first move and the first player could have played it.

We study two variations of Nim and Chomp which we call \textit{Monotonic Nim} and \textit{Diet Chomp}.

\textbf{Monotonic Nim.} In this variation the players are restricted to eat only shapes that are 1 by $k$ horizontal rectangles. Equivalently, this is a variation of Nim where the number of tokens in piles must be non-decreasing through the game. This can be viewed as a merger between Nim and Chomp where the moves are the same as in Nim and the positions are restricted to Young diagrams as in Chomp.

\textbf{Diet Chomp.} In this variation of Chomp players are not allowed to eat too much chocolate in one move. That is, the number of squares that can be removed in one move is restricted by a parameter $k$. The players are allowed to make a move the same way as in the game of Chomp with a condition that they can only chomp away not more than $k$ small chocolate squares at a time. When $k$ is given, we call this variation \textit{$k$-Diet Chomp}.  Unlike regular Chomp, the normal play becomes interesting here.

We also discuss the combination of these two games, which can be called \textit{Diet Monotonic Nim}. We also call it \textit{Slow Monotonic Nim}, because Nim itself does not imply chocolate or eating. In $k$-Slow Monotonic Nim the starting position is a sequence of non-decreasing positive integers $(a_1,a_2,\ldots,a_n)$. A player can subtract up to $k$ from one of the numbers, given that the resulting sequence is non-decreasing. It is worth noting that the mis\`{e}re game for monotonic variations is equivalent to considering the last token in the last pile being poisonous.

To start, we recapitulate known facts about subtraction games.

\section{Nim, Slow Nim and Extended Nim}

In the game of \textit{Nim} there are several piles of tokens. The players are allowed to take any number of tokens from a single pile. This game started combinatorial game theory, see \cite{Bouton,BCG,AND}. The solution to Nim is well known.

Suppose $A=(a_1,a_2,\ldots,a_n)$, is a position in this game. Let us denote the XOR operation as $\oplus$. Then the following theorem is true.

\begin{theorem}
For normal play Nim, the Grundy value of a position $(a_1,a_2,\ldots,a_n)$ is 
\[a_1 \oplus a_2 \oplus \cdots \oplus a_n.\]
\end{theorem}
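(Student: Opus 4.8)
The plan is to prove this by induction on the total number of tokens, using the standard characterization of Grundy values via the \mex (minimum excludant) function. Recall that the Grundy value $g(A)$ of a position $A$ equals $\mex$ of the set of Grundy values of all positions reachable in one move. So I would set $s = a_1 \oplus a_2 \oplus \cdots \oplus a_n$ and aim to show $g(A) = s$ by establishing the two defining properties of $\mex$: first, that no move from $A$ leads to a position whose XOR equals $s$ (so $s$ is genuinely excluded from the set of reachable Grundy values), and second, that for every nonnegative integer $t < s$ there \emph{is} a move from $A$ reaching a position whose XOR equals $t$ (so everything below $s$ is present). By the inductive hypothesis, the Grundy value of any reachable position is just its own XOR, so these two properties together give exactly $\mex = s$.

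For the first property, suppose a move changes pile $a_i$ to some $a_i' < a_i$, leaving the other piles fixed. The new XOR is $s \oplus a_i \oplus a_i'$. If this equaled $s$ we would need $a_i = a_i'$, contradicting that a move strictly decreases a pile. For the second property, fix $t < s$ and consider $d = s \oplus t$; let $p$ be the position of the highest set bit of $d$. Since $t < s$, that bit is set in $s$ but not in $t$, so some pile $a_i$ has its $p$-th bit set. I would then set $a_i' = a_i \oplus d$ and check that $a_i' < a_i$ (because XORing with $d$ clears the $p$-th bit while only altering lower bits, strictly decreasing the value), making the move legal; the resulting XOR is $s \oplus a_i \oplus a_i' = s \oplus d = t$, as desired.

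The base case is the terminal position $(0,0,\ldots,0)$, whose XOR is $0$ and whose Grundy value is $0$ since no moves are available. The main obstacle, though it is a routine one, is the bit-manipulation argument in the second property: carefully justifying that $a_i' = a_i \oplus d$ is strictly less than $a_i$ requires looking at the leading bit where $d$ and $a_i$ differ and confirming the sign of the change. Everything else is bookkeeping, so I would present that leading-bit argument explicitly and treat the rest briskly.
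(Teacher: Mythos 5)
Your proof is correct and complete: the verification that no move preserves the XOR, together with the leading-bit argument showing every $t < s$ is reachable by replacing some $a_i$ (with the $p$-th bit set) by $a_i \oplus s \oplus t < a_i$, is exactly what is needed for $\mex = s$, and the induction on the total number of tokens is well-founded. The paper itself states this theorem without proof, treating it as the classical result of Bouton \cite{Bouton}, and your argument is precisely the standard proof found in the cited references, so there is no divergence to report.
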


The P-positions correspond to Grundy value zero.

\begin{corollary}
A P-position in normal play Nim satisfies:
\[a_1 \oplus a_2 \oplus \cdots \oplus a_n = 0.\]
\end{corollary}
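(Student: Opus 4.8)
The plan is to derive this directly from the preceding theorem together with the general principle, already recorded in the excerpt, that the P-positions of an impartial game are exactly those of Grundy value zero. By the theorem, the Grundy value of $(a_1,a_2,\ldots,a_n)$ equals $a_1 \oplus a_2 \oplus \cdots \oplus a_n$. A position is therefore a P-position if and only if this XOR vanishes, which is precisely the asserted characterization. Under this route the corollary is a one-line consequence and requires no real work beyond combining the two facts.

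Should a self-contained argument be preferred (not relying on the Grundy-value-zero principle as a black box), I would instead run the classical Bouton verification. The idea is to show that the set $S = \{(a_1,\ldots,a_n) : a_1 \oplus \cdots \oplus a_n = 0\}$ satisfies the two defining closure properties of the P-positions: first, that the terminal position $(0,\ldots,0)$ lies in $S$; second, that from any position in $S$ every legal move leaves $S$; and third, that from any position outside $S$ some legal move lands in $S$. The first point is immediate. For the second, removing tokens from a single pile changes exactly one summand $a_i$, and since XOR is cancellative in each coordinate, altering one $a_i$ to a distinct value necessarily changes the total XOR away from $0$.

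The main obstacle is the third point, the ``escaping'' direction: given a position with nonzero XOR $x = a_1 \oplus \cdots \oplus a_n$, one must exhibit a legal move reaching XOR $0$. Here I would locate the highest set bit of $x$, pick a pile $a_i$ whose binary expansion also has that bit set (such a pile exists because the bit is set in the XOR), and replace $a_i$ by $a_i \oplus x$. The key checks are that this value is strictly smaller than $a_i$ (so the move is legal, i.e.\ tokens are removed rather than added), which follows from the chosen bit flipping from $1$ to $0$, and that the resulting XOR is $x \oplus x = 0$.

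I expect the immediate route via the theorem to be the intended proof, so I would present the corollary as following at once from the Grundy-value computation, relegating the Bouton-style verification to a remark only if a more elementary treatment is desired.
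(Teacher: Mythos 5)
Your primary route is exactly the paper's: the corollary follows immediately from the preceding theorem combined with the stated principle that P-positions are those of Grundy value zero, which is all the paper itself says. Your supplementary Bouton-style verification is correct as well (despite announcing ``two defining closure properties'' and then listing three), but it is not needed and is not part of the paper's argument.
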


Similarly, the P-positions for mis\`{e}re play are known \cite{BCG}:

\begin{theorem}
For the mis\`{e}re play if $\max{a_i} > 1$, a P-position satisfies:
\[a_1 \oplus a_2 \oplus \cdots \oplus a_n =0,\]
otherwise: 
\[a_1 \oplus a_2 \oplus \cdots \oplus a_n = 1.\]
\end{theorem}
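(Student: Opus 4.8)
The plan is to follow the standard strategy for impartial games: exhibit a candidate set of P-positions and verify that it satisfies the two defining closure properties, namely that every move out of the candidate set lands outside it, and that from every position outside the set (other than terminal ones) there is a move into it. Since misère play reverses the status of terminal positions, I would first record that in the misère convention the empty position (all piles zero) is an N-position, because the previous player just took the last token and loses. Accordingly I define the candidate set
\[
  P = \{A : \max_i a_i > 1,\ \textstyle\bigoplus_i a_i = 0\}\ \cup\ \{A : \max_i a_i \le 1,\ \textstyle\bigoplus_i a_i = 1\},
\]
and note that the empty position lies outside $P$, as it must. Everything is then established by induction on the total number of tokens $\sum_i a_i$.

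For the first closure property I would split a position $A \in P$ into two cases. If $\max_i a_i > 1$ and $\bigoplus_i a_i = 0$, I first observe that such a position must contain at least two piles of size $\ge 2$: a single large pile would have to equal the XOR of the remaining $0/1$ piles, which is at most $1$, a contradiction. Hence any single move leaves a pile of size $\ge 2$ while changing the XOR to a nonzero value, landing in the complement of $P$. If instead $\max_i a_i \le 1$ and $\bigoplus_i a_i = 1$, the only legal moves remove one of the (odd number of) single tokens, producing an even number of them and again leaving $P$.

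For the second closure property the interesting work is in positions with $\max_i a_i > 1$ and $\bigoplus_i a_i \ne 0$. When at least two piles have size $\ge 2$, the ordinary Nim move to XOR $=0$ keeps a pile of size $\ge 2$ and so lands in $P$. The delicate case, and the step I expect to be the main obstacle, is when exactly one pile has size $\ge 2$: here the normal-Nim move to XOR $=0$ would flatten every pile to $0/1$ with even parity, i.e.\ into the complement of $P$, so the normal-Nim recipe fails outright. Instead I would reduce the unique large pile to $0$ or to $1$, choosing whichever makes the number of remaining single tokens odd; one of the two choices always works since they have opposite parities, and the result lands in the $\max \le 1$, XOR $=1$ part of $P$. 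The remaining positions outside $P$, those with $\max_i a_i \le 1$ and an even (positive) number of tokens, are handled by removing one token to reach an odd number. Once all cases are checked, the induction identifies $P$ with the true set of misère P-positions, which is the assertion of the theorem.
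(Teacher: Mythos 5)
Your proof is correct, and in fact it supplies something the paper does not: the paper states this theorem without proof, citing it as a known result from Berlekamp, Conway, and Guy's \emph{Winning Ways} \cite{BCG}, so there is no internal argument to compare against. What you give is the standard Bouton-style misère analysis, carried out in full: you correctly flip the status of the terminal position (an N-position under the misère convention), define the candidate set $P$ accordingly, and verify both closure properties by induction on the token count. You also isolate and handle the genuinely delicate point, namely a position with exactly one pile of size $\ge 2$, where the normal-play move to XOR zero fails and one must instead flatten the large pile to $0$ or $1$ so as to leave an odd number of singleton piles; your parity observation that one of the two choices always works is exactly the classical resolution. Your supporting lemma --- that a zero-XOR position with $\max a_i > 1$ must contain at least two piles of size $\ge 2$, which guarantees both that moves from such a P-position cannot escape into the all-small regime and that the ordinary Nim move stays within the large regime --- is the right structural fact and is proved correctly. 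The only stylistic remark is that the two closure properties, together with the terminal position lying outside $P$, already characterize the misère P-positions, so the induction on $\sum_i a_i$ is merely the scaffolding that makes this recursion well-founded; but making it explicit, as you do, is harmless and arguably cleaner.
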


A \textit{subtraction game}, denoted Subtraction($S$), is played with heaps of tokens. A move is defined by choosing a heap and removing any number of tokens, such that this number is in set $S$.

The subtraction games are well-studied \cite{BCG,AND}, and we restrict ourselves to the case when $S$ is equal to $[k]$, where the latter denotes the range of integers from 1 to $k$ inclusive. We call this set of games \textit{Slow Nim}. For a particular $k$ we call the game \textit{k-Slow Nim}.

The Grundy values and P-positions for this game are known.
\begin{theorem}
For $k$-Slow Nim normal play, the Grundy value for a position $(a_1,a_2,\ldots,a_n)$ is 
\[(a_1 \pmod{k+1}) \oplus (a_2 \pmod{k+1}) \oplus \cdots \oplus (a_n \pmod{k+1}).\]
\end{theorem}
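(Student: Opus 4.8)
The plan is to reduce the multi-heap statement to a single-heap computation via the Sprague--Grundy theorem, which expresses the Grundy value of a disjunctive sum of games as the XOR of the Grundy values of its components. Since $k$-Slow Nim on a position $(a_1,\ldots,a_n)$ is exactly the disjunctive sum of $n$ independent single-heap games, it suffices to prove that the Grundy value of a single heap of size $a$ equals $a \bmod (k+1)$; the stated formula then follows by taking XOR over the heaps. This is the same reduction that underlies the Nim formula recorded above, where the single-heap value is simply the heap size.

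First I would set up the single-heap game: from a heap of size $a$ the legal moves lead precisely to heaps of sizes $a-1, a-2, \ldots, \max(a-k,0)$. I would then prove $g(a) = a \bmod (k+1)$ by strong induction on $a$. The base case $a=0$ is immediate, since no moves are available and $g(0) = \mex(\emptyset) = 0$. For the inductive step I would assume $g(b) = b \bmod (k+1)$ for all $0 \le b < a$ and compute $g(a) = \mex\{\,(a-j) \bmod (k+1) : 1 \le j \le \min(a,k)\,\}$.

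The crux is the observation that any $k+1$ consecutive integers form a complete residue system modulo $k+1$. When $a \ge k$, the options $a-1,\ldots,a-k$ are the $k$ integers obtained by deleting $a$ from the consecutive block $a, a-1, \ldots, a-k$; hence their residues are exactly all residues except $a \bmod (k+1)$, so the mex is $a \bmod (k+1)$. When $a < k$, the options run down to $0$, their Grundy values are $\{0,1,\ldots,a-1\}$, and the mex is $a$, which again equals $a \bmod (k+1)$ since $a \le k$.

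I expect the only delicate point to be this mex computation, specifically keeping the boundary case $a < k$ straight and confirming that deleting $a$ from a length-$(k+1)$ block of consecutive integers removes precisely the residue class $a \bmod (k+1)$. Everything else is bookkeeping, and once the single-heap value is established the multi-heap formula is an immediate application of Sprague--Grundy.
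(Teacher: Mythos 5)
Your proof is correct and complete: the Sprague--Grundy reduction to a single heap, the strong induction giving $g(a) = a \bmod (k+1)$, and the mex computation via the complete-residue-system observation (with the boundary case $a < k$ treated separately) are all sound, and note that your case split $a \ge k$ versus $a < k$ handles $a = k$ consistently on the first branch. For comparison, the paper offers no proof of this statement at all --- it records the theorem as a known result with citations to the subtraction-game literature --- and your argument is precisely the standard proof found in those references, so there is no divergence of approach to report.
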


Therefore the P-positions are such that 
\[(a_1 \pmod{k+1}) \oplus (a_2 \pmod{k+1}) \oplus \cdots \oplus (a_n \pmod{k+1}) = 0.\]

\begin{theorem}
The P-positions in mis\`{e}re k-Slow Nim considered modulo $k+1$ are:
\begin{itemize}
\item If there is a pile that is more than 1, then XOR is zero.
\item If every pile is zero or one, then there is an odd number of ones. (XOR is 1)
\end{itemize}
\end{theorem}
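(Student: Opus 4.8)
The plan is to verify that the union of the two bulleted families — call it $\mathcal{P}$ — is exactly the set of misère P-positions, by strong induction on the total number of tokens $\sum_i a_i$. Throughout I work with the residues $b_i = a_i \bmod (k+1)$, as suggested by the normal-play Grundy formula stated above. The driving observation is that a legal move subtracts some $d \in \{1,\dots,k\}$ from a single pile, and since $d \not\equiv 0 \pmod{k+1}$, every move changes exactly one residue $b_j$ to a \emph{different} residue $b_j'$, hence changes the residue-XOR $\bigoplus_i b_i$ by $b_j \oplus b_j' \neq 0$. I would then check the three standard conditions: (i) the terminal all-zeros position lies outside $\mathcal{P}$ (it has no ones, an even count, so it is correctly an N-position in misère play); (ii) every move out of a position in $\mathcal{P}$ lands outside $\mathcal{P}$; and (iii) from every non-terminal position outside $\mathcal{P}$ some move lands in $\mathcal{P}$.

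For the closure condition (ii) I would treat the two types separately. If some $b_i \ge 2$ and the XOR is $0$, then in fact at least two residues must be $\ge 2$, since a single residue $\ge 2$ cannot be cancelled by residues that are all $\le 1$; hence any single move leaves a residue $\ge 2$ while making the XOR nonzero, so the result is outside $\mathcal{P}$. If instead all $b_i \le 1$ with an odd number of ones, any move either keeps all residues $\le 1$ while flipping the parity of the number of ones to even, or else creates a residue $\ge 2$ together with a nonzero XOR; both outcomes lie outside $\mathcal{P}$.

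The crux is the reachability condition (iii). A position outside $\mathcal{P}$ is of one of two kinds: $(\alpha)$ some $b_i \ge 2$ with nonzero XOR, or $(\beta)$ all $b_i \le 1$ with an even number of ones. In case $(\alpha)$ the canonical Nim move — reduce the residue carrying the top bit of the XOR so as to cancel it — is always a \emph{decrease} of that residue, hence realizable by subtracting between $1$ and $k$ tokens from a pile that is certainly large enough; whenever this leaves a residue $\ge 2$, the result is a zero-XOR position with a large pile, i.e.\ type~(A). In case $(\beta)$, if some pile is a one I drop it to a zero, flipping the number of ones to odd; if every residue is $0$ but the position is non-terminal, some pile is a positive multiple of $k+1$, and subtracting exactly $k$ turns its residue into a single one, again producing an odd count.

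The main obstacle is the genuinely misère part of case $(\alpha)$: when exactly one residue is $\ge 2$ and cancelling the XOR forces that pile below $2$, the naive Nim move sends the position to the all-$\le 1$ regime with an \emph{even} number of ones — an N-position, not a P-position. There I would instead tune that single large pile (which can be decreased to residue $0$ or to residue $1$) so that the resulting number of ones is odd, landing in type~(B). This single-pile correction is precisely the adjustment appearing in the misère Nim theorem, and checking it alongside the Nim-atypical move in case $(\beta)$ — the residue-\emph{increasing} subtraction of $k$ from a live residue-zero pile, available exactly because ``residue zero'' no longer means ``empty'' — is where the argument departs from ordinary misère Nim and must be verified with care.
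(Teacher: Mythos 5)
Your argument is correct, but there is nothing in the paper to compare it against: this theorem appears in the section that explicitly ``recapitulates known facts about subtraction games'' and is stated without proof, with the literature (\cite{BCG,AND}) cited for the whole family of results. So where the paper offers nothing, you supply a complete, self-contained verification, and it holds up. Your three-condition framework is the right one for mis\`ere play (terminal positions excluded from $\mathcal{P}$, closure of $\mathcal{P}$ under moves into its complement, reachability of $\mathcal{P}$ from non-terminal positions outside it), and the supporting details all check out: since $1 \le d \le k$, every move changes exactly one residue to a genuinely different residue; XOR zero together with a residue $\ge 2$ forces at least two residues $\ge 2$, which is what closes the first family; the canonical Nim move on residues is a decrease, hence legal because $a_j \ge b_j$; and both families are automatically non-terminal (a pile $\ge 2$ in the first, at least one pile with residue $1$ in the second), which your induction tacitly needs. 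You also correctly isolate the two places where the argument departs from ordinary mis\`ere Nim: the single-large-residue adjustment (and in fact, when exactly one residue is $\ge 2$ and the rest are $\le 1$, the canonical move \emph{always} lands at residue $0$ or $1$, since then $X = b \oplus p$ with $p \in \{0,1\}$, so $b \oplus X = p$; your conditional phrasing is harmless but the case is unconditional), and the residue-increasing subtraction of $k$ from a pile that is a positive multiple of $k+1$, which handles the all-residues-zero non-terminal case that has no analogue in ordinary Nim. In short: a correct proof of a statement the paper leaves as a citation.
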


In the next variation we want to allow the players to put tokens back into a pile. It seems that such a game is not finite, as an infinite loop might be created. To avoid that, we put a limit on the number of tokens that can be put back. We call this game \textit{Extended Nim}. Similarly, \textit{Extended $k$-Slow Nim} is like $k$-Slow Nim where, in addition, the players are allowed to put up to $k$ tokens back into any one of the piles, given that the total number of tokens that are put back is limited by $k$.

\begin{theorem}
The extended games have the same P-positions as the non-extended equivalents and the same Grundy values.
\end{theorem}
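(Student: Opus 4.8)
The plan is to run the standard mex-characterization of Grundy values, being careful to put the cumulative cap on returned tokens into the game state. I would model an extended position as a pair $(a_1,\dots,a_n;b)$, where $b$ is the remaining ``put-back budget'' (initially $k$), since it is precisely this global cap that keeps the game finite. First I would record finiteness and well-foundedness via the potential $\mu = 2b + \sum_i a_i$: a subtraction move lowers $\sum_i a_i$ by at least $1$ while leaving $b$ fixed, and returning $t\ge 1$ tokens raises $\sum_i a_i$ by $t$ but lowers $b$ by $t$, for a net change of $-t$. Thus $\mu$ strictly decreases on every move, so the extended game terminates, its Grundy function is well defined, and I may induct on $\mu$.

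Let $g$ denote the non-extended Grundy value of a configuration (the XOR for Extended Nim, the XOR of residues modulo $k+1$ for Extended $k$-Slow Nim). I would prove by induction on $\mu$ that the extended Grundy value of $(a_1,\dots,a_n;b)$ equals $g(a_1,\dots,a_n)$, independently of $b$. Two conditions must hold at each position $P$: (i) every value below $g(P)$ is reachable by some extended move, and (ii) no extended move reaches the value $g(P)$. Condition (i) is immediate: all the original subtraction moves survive in the extended game and, by the non-extended theorems, already realize every value below $g(P)$; since these moves do not touch $b$, the inductive hypothesis assigns them the intended values.

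The crux is condition (ii), and the only genuinely new content is that the put-back moves never preserve $g$. A put-back changes a single pile $a_i \mapsto a_i + t$ with $t\ge 1$. For Extended Nim the value shifts by $a_i \oplus (a_i+t)\neq 0$, since $a_i \neq a_i+t$; for Extended $k$-Slow Nim, the constraint $1\le t\le k$ forces $t\not\equiv 0 \pmod{k+1}$, so $(a_i+t)\bmod(k+1)\neq a_i \bmod(k+1)$ and the residue-XOR again changes. In either case the value strictly changes, so (together with the non-extended mex property for the original moves) no extended move attains $g(P)$. Hence $\mex$ over all extended moves from $P$ equals $g(P)$, which settles the normal-play claim; the P-positions are then exactly the positions of Grundy value $0$. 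I expect this invariance to be the main obstacle, together with one base-type case worth spelling out: an all-empty configuration with positive budget is not terminal, yet its only available moves are put-backs that push the XOR off zero, so its value is still $0$, consistently with the formula.

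For the mis\`ere claim I would argue directly on P-positions, using the mis\`ere characterizations stated earlier rather than Grundy values. It suffices to show that every extended move out of a mis\`ere P-position lands in an N-position, since the N-to-P reachability is inherited unchanged from the original moves. For put-back moves this again follows from the residue/XOR shift above. The one delicate point is the boundary between the two regimes of the mis\`ere characterization, namely whether all piles are at most $1$ or some pile exceeds $1$: a put-back can carry a position across this boundary, and in each such case I would verify explicitly that the resulting position is N, so that the mis\`ere P-positions are likewise preserved.
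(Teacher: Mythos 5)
Your proposal is correct and follows the same overall skeleton as the paper's proof: show that the added put-back moves can never realize the original Grundy value, so the $\mex$ over the enlarged option set is unchanged, and the Grundy function (hence the set of P-positions) is preserved. The two arguments diverge on the key step, and each has an advantage. You are more careful on the foundations: the paper never puts the remaining put-back budget into the game state and never addresses termination, whereas your state $(a_1,\ldots,a_n;b)$ and the potential $\mu = 2b + \sum_i a_i$ make the induction legitimate — this is rigor the paper's proof silently assumes. Conversely, where you verify that a put-back changes the value by appealing to the explicit formulas ($a_i \oplus (a_i+t) \neq 0$, or the shift of residues modulo $k+1$), the paper argues formula-free: a put-back from $A$ to $A'$ can be undone by a single move of the \emph{non-extended} game, so $A$ is an option of $A'$ and the definition of $\mex$ gives $g(A') \neq g(A)$ immediately. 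That reversibility argument needs no closed form for $g$ and would apply verbatim to any extension whose new moves are each reversible by one original move; your computation is concrete and equally valid for the two games at hand, but it is tied to knowing the Grundy formulas in advance.

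There is one genuine (though small and fixable) flaw in your mis\`ere paragraph. You assert that N-to-P reachability ``is inherited unchanged from the original moves,'' but the all-zero configuration with positive remaining budget is precisely where this fails: it is an N-position that is terminal in the non-extended game, hence has \emph{no} original moves, yet it is non-terminal in the extended game, so you must exhibit an extended move from it to a P-position. Putting back a single token, producing exactly one pile equal to $1$ (an odd number of ones), does the job, and you visibly noticed the analogous boundary case in normal play — so the repair is one line, but as written the claim is false for that family of positions. It is worth noting that the paper's written proof is entirely in terms of Grundy values and therefore only establishes the normal-play statement; your mis\`ere analysis, once patched and with the promised regime-boundary cases checked, actually supplies an argument the paper needs (its later remark that the monotonic reduction works for mis\`ere play rests on it) but does not spell out.
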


\begin{proof}
Consider a position $A$. Let $S$ be the set of all positions to which we can move from $A$ in a regular game and $S'$ be the positions to which we can move from $A$ in the extended version. As we add moves: $S \in S'$. Consider the sets of Grundy values $G$ for $S$ and $G'$ for $S'$ with respect to the non-extended game. On one hand, $G \in G'$. On the other hand, $G(A) \notin G'$. The latter is due to the fact that a new position $A'$ to which we can move from $A$ in the extended game have the Grundy value different from $A$ due to the fact that there is a move from $A'$ to $A$ in the non-extended game. It follows that $\mex(G) = \mex(G')$, and by definition the Grundy value of $A$ in the extended version is $G(A)$.

It follows that P-positions are the same in both games.
\end{proof}

\section{Monotonic Games}

To move from Nim to Chomp, we consider games where a position $A = (a_1,a_2,\ldots,a_n)$ is allowed only if the sequence is non-decreasing $a_i \leq a_{i+1}$, for $1 \leq i < n$. 

\textit{Monotonic Nim} is a monotonic game where you can take any number of tokens from one pile, given that the resulting sequence is non-decreasing.

If, in addition, we put a limit of $k$ on the total number of tokens that can be taken we get a game that we call \textit{Monotonic k-Slow Nim}. As in any monotonic game the only positions that are allowed are sequences of non-decreasing positive integers $(a_1,a_2,\ldots,a_n)$. A player can subtract any number of tokens between 1 and $k$ inclusive from one pile, given that the resulting sequence is non-decreasing.

Suppose we have a position $A = (a_1,a_2,\ldots,a_{2k})$ with an even number of piles. We map it a position $B = (b_1,b_2,\ldots,b_k)$, where $b_i = a_{2i} - a_{2i-1}$. For a position with an odd number of piles we first extend it to a position with an even number of piles, by adding a zero pile in front. We call the position $B$ the \textit{difference position}.

\begin{theorem}
A position $A$ is a P-position in a Monotonic game if and only if the corresponding difference position is a P-position in the corresponding extended game.
\end{theorem}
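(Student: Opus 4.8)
The plan is to set up an explicit dictionary between moves in the Monotonic game on $A=(a_1,\dots,a_{2k})$ and moves in the extended game on the difference position $B=(b_1,\dots,b_k)$, where $b_i=a_{2i}-a_{2i-1}$, and then run an induction on the total $\sum_j a_j$, which strictly decreases under every Monotonic move. First I would dispose of the odd case: padding with a leading zero pile yields a legal even-length position whose phantom pile, being $0$, can never be moved and whose difference position is unchanged, so the padded game offers exactly the moves of the original. Hence it suffices to treat even length $n=2k$, and the leading pile $a_1$ (lower bound $0$) then plays the role of the padded phantom pile.

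The core step is the move dictionary. Since each $a_{2i-1}$ and each $a_{2i}$ appears in exactly one difference $b_i$, every Monotonic move alters exactly one coordinate of $B$. Decreasing the even-indexed pile $a_{2i}$ to a legal value $c\in[a_{2i-1},a_{2i})$ sends $b_i$ to $c-a_{2i-1}$, which ranges over all of $[0,b_i)$; these moves realize precisely the ordinary take-moves of Nim on pile $i$ of $B$ (with the slow constraint, if present, capping the decrease at $k$, i.e.\ a $k$-Slow Nim take). Decreasing the odd-indexed pile $a_{2i-1}$ to $c\in[a_{2i-2},a_{2i-1})$ instead raises $b_i$; these are exactly the put-back moves of the extended game. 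I would record here the reversibility that powers the argument: after such an increase one can lower $a_{2i}$ to restore the original $b_i$, so these extra moves are reversible in the sense of the Extended Nim theorem.

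With the dictionary in hand I would prove, by induction on $\sum_j a_j$, that $A$ is a P-position in the Monotonic game if and only if $\bigoplus_i b_i=0$ (respectively $\bigoplus_i (b_i \bmod (k+1))=0$ in the slow case). For the backward direction, from a balanced $B$ every take-move strictly lowers one $b_i$ and every put-back move strictly raises one $b_i$; in the plain game this moves a single coordinate to a different value, and in the slow game it shifts one coordinate by an amount in $[1,k]$, hence to a different residue modulo $k+1$. Either way exactly one term of the defining XOR changes, so every option $B'$ is unbalanced, and by induction every option $A'$ is an N-position; thus $A$ is a P-position. For the forward direction, from an unbalanced $B$ the standard Nim (or $k$-Slow Nim) balancing move lowers a single $b_i$ and is realized by decreasing the corresponding $a_{2i}$, landing on a balanced $B'$ that is a P-position by induction; hence $A$ is an N-position. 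Finally I would invoke the Grundy theorem for (slow) Nim together with the Extended Nim theorem to rephrase ``$\bigoplus_i b_i=0$'' as ``$B$ is a P-position of the corresponding extended game,'' completing the equivalence.

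The main obstacle I anticipate is conceptual rather than computational: the amount by which a put-back can raise $b_i$ equals $a_{2i-1}-a_{2i-2}$, a quantity depending on $A$ and not on $B$ alone, so the induced game on $B$ is not literally a fixed extended game with a uniform bound. The point to get right is that this is harmless, since the characterization of P-positions never uses how large a put-back may be, only that any put-back from a balanced position lands on an unbalanced one (a single coordinate changes its size, resp.\ its residue) — precisely the phenomenon captured by the Extended Nim theorem. I would also be careful with the boundary piles, checking that the dictionary is valid at both ends of the sequence, so that the reduction of the odd case and the treatment of the first pile $a_1$ go through cleanly.
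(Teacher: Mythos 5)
Your proposal is correct for normal play, and its skeleton is the same as the paper's: you build the identical move dictionary (decreasing the even-indexed pile $a_{2i}$ realizes a take from $b_i$; decreasing the odd-indexed pile $a_{2i-1}$ realizes a reversible put-back into $b_i$, and each move touches exactly one coordinate of $B$). Where you genuinely diverge is in how the equivalence is certified. The paper, after setting up the dictionary, observes that the Monotonic game sits between plain (Slow) Nim and Extended (Slow) Nim on $B$ and simply invokes the principle behind its Extended Nim theorem --- the extra moves are reversible, hence never lead from a P-position to a P-position --- to conclude the P-sets coincide. You instead run a direct induction on $\sum_j a_j$ establishing the explicit characterization $\bigoplus_i b_i = 0$ (respectively $\bigoplus_i (b_i \bmod (k+1)) = 0$), translating back to ``P-position of the extended game'' only at the end. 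Your route buys rigor precisely where the paper is glossy: since the put-back bound $a_{2i-1}-a_{2i-2}$ depends on $A$ rather than on $B$ alone, the induced game on $B$ is not literally a fixed extended game, so a bare citation of the Extended Nim theorem needs the supplementary remark you make --- that the P-position argument uses only the facts that any put-back unbalances a balanced position and that the balancing take-move is always available; you also verify the slow-case legality bounds ($d \le r_i \le b_i$ and $d \le k$) that the paper leaves implicit. What you lose is the play convention: the paper notes the theorem holds for mis\`{e}re play as well, and its formulation in terms of reversible extra moves that cannot go P-to-P transfers to that setting, whereas your XOR-based induction is specific to normal play and would have to be rerun against the mis\`{e}re P-sets (the paper's Theorems 2 and 4) to recover the full claim.
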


\begin{proof}
In the monotonic game, we can take any number of tokens between 1 and $a_{2i} - a_{2i-1}$ inclusive from pile $2i$. This is equivalent to taking any number of tokens between 1 and $b_i$ inclusive from pile $i$ in the corresponding difference game. In addition, in the monotonic game, we can take any number of tokens between 1 and $a_{2i-1} - a_{2i-2}$
from pile $2i-1$. This is equivalent to adding some tokens to the $i$-th pile in the differences position. Notice that the number of tokens we can add has constraints. In any case, the total number of tokens we can add is limited by $\sum\limits_{i=1}^k a_i$. We can say that the monotonic game is equivalent to playing the extended Nim with additional constraints on the difference position.

In any case, we added some extra moves to the corresponding game that do not allow to move from a P-position to a P-position. That means the set of P-positions on the difference game exactly corresponds to the P-positions in the Monotonic game.
\end{proof}

Notice that the theorem works for both normal and mis\`{e}re plays.

\section*{2-Diet Chomp Normal Play}

Now we move to Chomp for health-conscious players. Namely, we study a variation of Chomp where a player makes a Chomp move that is limited to one or two chocolate squares. The positions in our game are $A = (a_1,a_2,\ldots,a_n)$, so that the sequence is non-decreasing: $a_i \leq a_{i+1}$, where $1 \leq i  < n$. We can assume that $a_0 = 0$.

In one move we are allowed to:

\begin{itemize}
\item subtract 1 from $a_i$ if $a_i > a_{i-1}$.
\item subtract 2 from $a_i$ if $a_i > a_{i-1}+1$.
\item subtract 1 from $a_i$ and $a_{i+1}$ if $a_{i+1} = a_i > a_{i-1}$.
\end{itemize} 

For 2-Diet Chomp, the P-positions depend on the total number of tokens.

\begin{lemma}
The P-positions are such that the total number of tokens $\big(\sum\limits_{i=1}^{n}a_i\big)$ is divisible by 3.
\end{lemma}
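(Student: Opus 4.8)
The plan is to characterize the P-positions by the standard two-part argument for normal-play games: show that every move from a position with $\sum_{i} a_i \equiv 0 \pmod 3$ lands in a position with $\sum_i a_i \not\equiv 0 \pmod 3$, and conversely that from every position with $\sum_i a_i \not\equiv 0 \pmod 3$ there is a legal move to a position whose sum is divisible by $3$. Since each move strictly decreases the total $S = \sum_{i=1}^{n} a_i$, the game is finite, and these two facts, together with the terminal all-zero position having $S=0$, will pin down the P-positions by induction on $S$.

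First I would record the effect of each legal move on $S$: subtracting $1$ from a single pile decreases $S$ by $1$, subtracting $2$ from a single pile decreases $S$ by $2$, and subtracting $1$ from each of two equal adjacent piles decreases $S$ by $2$. Thus every move changes $S$ by either $-1$ or $-2$, never by a multiple of $3$. The ``P-to-N'' direction is then immediate: from $S \equiv 0$ any move produces $S \equiv 2$ or $S \equiv 1$, never $0$.

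For the ``N-to-P'' direction I would split on the residue. If $S \equiv 1 \pmod 3$ then $S>0$, so some pile is nonzero; taking the least index $i$ with $a_i>0$ gives $a_i > a_{i-1} = 0$, so the first move (subtract $1$) is legal and yields $S \equiv 0$. If $S \equiv 2 \pmod 3$ I need a move decreasing $S$ by $2$, i.e. a legal instance of the second or third move, and the main obstacle is showing such a move always exists. I would argue by contradiction: if neither is available, then all consecutive differences $d_i = a_i - a_{i-1}$ lie in $\{0,1\}$ (otherwise the second move applies), and no $d_i = 1$ is immediately followed by $d_{i+1}=0$ (otherwise the third move applies, since that guard is exactly $a_{i+1}=a_i>a_{i-1}$). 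Hence the difference sequence is non-decreasing, of the form $0^a 1^b$, so $A = (0,\ldots,0,1,2,\ldots,b)$ and $S = \binom{b+1}{2}$ is a triangular number.

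The crux is then the elementary fact that triangular numbers are never $\equiv 2 \pmod 3$: checking $b \bmod 3 \in \{0,1,2\}$ shows $\binom{b+1}{2} \equiv 0,\,1,\,0 \pmod 3$ respectively. This contradicts $S \equiv 2$, so in that case a second- or third-type move must exist, and any such move sends $S$ to $S-2 \equiv 0$. I would also verify that each move used keeps the sequence non-decreasing, which follows directly from the guards $a_i > a_{i-1}$ and $a_i > a_{i-1}+1$ built into the move definitions, so every move produced is legal. Reducing the ``no second or third move'' positions to staircases and confirming the triangular-number congruence is the step needing the most care; the rest is bookkeeping on $S$.
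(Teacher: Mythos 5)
Your proof is correct and takes essentially the same route as the paper's: both hinge on the observation that every move decreases the total by $1$ or $2$, reduce the $S \equiv 2 \pmod 3$ case to positions admitting no two-square move, identify those as staircase positions, and conclude via the fact that triangular numbers are never $\equiv 2 \pmod 3$. Your version merely spells out two details the paper asserts without proof (the difference-sequence argument showing the obstructing positions are exactly staircases, and the explicit check of $\binom{b+1}{2} \bmod 3$), which is a welcome tightening rather than a different approach.
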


\begin{proof}
The terminal position, $(0)$, is a P-position. P-positions differ by multiples of $3$, therefore there is no move from a P-position to a P-position. What is left to show is that all N-positions have a move to a P-position.

Suppose $\big(\sum\limits_{i=1}^{n}a_i\big) \equiv 1 \mod 3$. You can always remove one square, so it moves to a P-position. If $\big(\sum\limits_{i=1}^{n}a_i\big) \equiv 2 \mod 3$, removing two squares moves it to a P-position, except there could be a position such that there is no valid move that removes two squares. 

The only positions for which it is not allowed to remove two squares are ``perfect stairs'' positions: $(1,2,\ldots,n)$. However, the total number of tokens in such a position is a triangular number; and it is widely known that triangular numbers do not have remainder $2$ modulo 3. That means we can always move from an N-position to a P-position.
\end{proof}

Interestingly, in this case the game is equivalent to playing 2-Slow Nim on one pile.

\section*{2-Diet Chomp Mis\`{e}re play}

This game is more difficult than the normal play. 

We can explicitly describe P-positions for narrow rectangles. 

\begin{lemma}
For rectangles 1 by $n$, the P-positions are $3k+1$. For rectangles 2 by $n$ the P-positions are $(a,a+3k+1)$. Here $k \geq 0$.
\end{lemma}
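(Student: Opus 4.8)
The plan is to treat both parts as instances of a single statement about the \emph{gap} between the piles, and to isolate the one genuinely two-dimensional move (the third move) from the start. Writing a one-pile position $(n)$ as $(0,n)$ and a two-pile position as $(a_1,a_2)$, let $d$ denote the gap: $d=n$ in the first case and $d=a_2-a_1$ in the second. I claim that in both cases a position is a P-position exactly when $d\equiv 1\pmod 3$, which is precisely the assertions $n=3k+1$ and $a_2=a_1+3k+1$ of the lemma. The whole proof then rests on one core fact: the P-positions of misère one-pile $2$-Slow Nim are the values $3k+1$.

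First I would establish that core fact, which also settles the $1$-by-$n$ case outright. On a single pile the third move is never available (it needs two equal nonzero piles), so the only moves are ``subtract $1$'' and ``subtract $2$''; this is exactly one-pile $2$-Slow Nim, with the poison on the last token. The base case is $(1)$, from which the only move eats the poison, so $(1)$ is a P-position; an easy induction modulo $3$ (or the misère Slow Nim theorem quoted above) then gives that $(n)$ is a P-position iff both $(n-1)$ and $(n-2)$ are N-positions, iff $n\equiv 1\pmod 3$. Hence the $1$-by-$n$ P-positions are $n=3k+1$.

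For the $2$-by-$n$ case I would pass to the difference position $b_1=d$. Translating the moves, any move on the larger pile lowers $d$ by $1$ or $2$, any move on the smaller pile raises $d$ by $1$ or $2$ (subject to $a_1\ge 1$, respectively $a_1\ge 2$), and the third move is available only when $a_1=a_2$, i.e. $d=0$, and returns a position with $d=0$. Ignoring the third move for a moment, moves $1$ and $2$ are exactly Monotonic $2$-Slow Nim, so the difference-position theorem identifies these P-positions with those of extended one-pile $2$-Slow Nim, which by the extension theorem and the core fact are the positions with $d\equiv 1\pmod 3$. It then remains to check that restoring the third move changes nothing: since it is available only from $d=0$ (an N-position, as $0\not\equiv 1$) and produces another position with $d=0$, it never connects a P-position to a P-position and is never needed as an escape, so the P-set is unchanged.

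The step I expect to be the main obstacle is precisely this interaction between the non-standard third move, the one-sided constraints on the difference-raising moves, and the location of the poison in the base cases. Concretely, verifying that every N-position reaches a P-position splits into cases: from $d\equiv 2$ one subtracts $1$ from the larger pile; from $d\equiv 0$ with $d\ge 3$ one subtracts $2$ from the larger pile; and from $d=0$ one must instead raise the gap by subtracting from the smaller pile, which requires $a_1\ge 1$, the case $a_1=0$ being the empty terminal where the poison has just been taken. Keeping these boundary cases and the third move from corrupting the clean modulo-$3$ argument is where the care is needed; once they are checked, the difference-position framework supplies the rest mechanically.
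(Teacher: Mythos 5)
Your proposal is correct, but it takes a somewhat different route from the paper for the $2$-by-$n$ case. The paper's proof is entirely direct: it takes $a_2-a_1\equiv 1\pmod 3$ as the candidate P-set, observes that the two-square move is unavailable from such positions (it requires $a_1=a_2$) and that any one-pile move changes the difference modulo $3$, and then exhibits explicit escapes from N-positions, namely $(a,a+3k+2)\to(a,a+3k+1)$ and $(a,a+3k)\to(a-1,a+3k)$, with a terse side remark handling $a=0$. You instead route the argument through the paper's earlier machinery: Theorem 7 (difference positions) plus Theorem 6 (extended games) identify the P-positions of the game without the two-square move with those of one-pile mis\`ere $2$-Slow Nim, and you then treat the two-square move as a perturbation that is only available from $d=0$ (an N-position, since $0\not\equiv 1$) and preserves $d=0$, so it can neither connect two P-positions nor be needed as an escape. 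One caution about that framing: the paper proves Theorem 6 via Grundy values and $\mex$, which is a normal-play argument, so invoking it for mis\`ere play leans on a claim the paper itself only asserts (in the remark after Theorem 7) rather than proves. Fortunately, your closing case analysis --- subtract $1$ from the larger pile when $d\equiv 2$; subtract $2$ when $d\equiv 0$ and $d\ge 3$; subtract from the smaller pile when $d=0$ and $a_1\ge 1$; and note that $(0,0)$ is terminal, hence an N-position under mis\`ere play --- together with the mod-$3$ invariance of the gap is already a complete, self-contained proof that does not actually need Theorems 6 and 7, and it is more careful at the boundary than the paper's own one-line treatment of the $a=0$ case. Both arguments ultimately rest on the same key observation: the only genuinely two-pile move requires equal piles, hence never occurs from a P-position.
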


\begin{proof}
For 1 by $n$ rectangles, the game is equivalent to 2-Slow Nim on one pile, mis\`{e}re play. For 2 by $n$ rectangles, $(a_1,a_2)$ is a P-position if and only if $a_2 - a_1 \equiv 1 \pmod 3$. Notice that we cannot have a move that changes both values from a P-position. By subtracting 1 or 2 from each coordinate we change the difference modulo 3. That means every move from a P-position goes to an N-position.

On the other hand, from an N-position $(a,a+3k+2)$, we can move to $(a,a+3k+1)$, which is a P-position. From an N-position $(a,a+3k)$, we can move to $(a-1,a+3k)$, which is a P-position. Additionally, if $a=0$, $(0,3k+1)$ is a P-position.
\end{proof}

For 3 by $n$ rectangles, the situation is more complicated. We wrote a program and observed that P-positions are periodic with period 12. That is, position $(a_1,a_2,a_3)$ is the same type as $(a_1+12,a_2+12,a_3+12)$. Figure~\ref{fig:DMCP-pos} shows P-positions for values $a_1$ ranging from 0 to 11 inclusive. The left bottom corner corresponds to $(a_1,a_1,a_1)$ and P-positions are black.

\begin{table}[htb]
\centering
\begin{tabular}{c c c c} 
\includegraphics[scale=0.2]{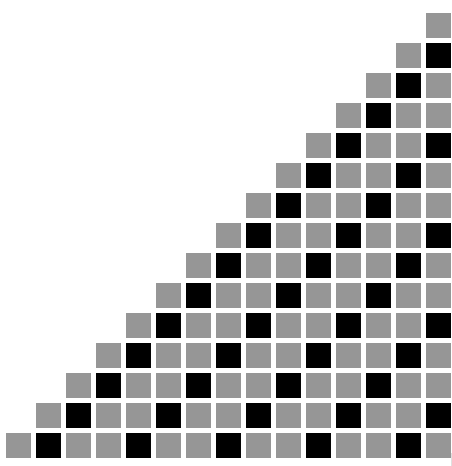} & \includegraphics[scale=0.2]{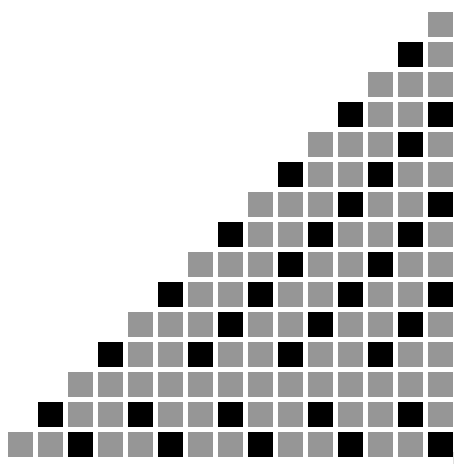} & \includegraphics[scale=0.2]{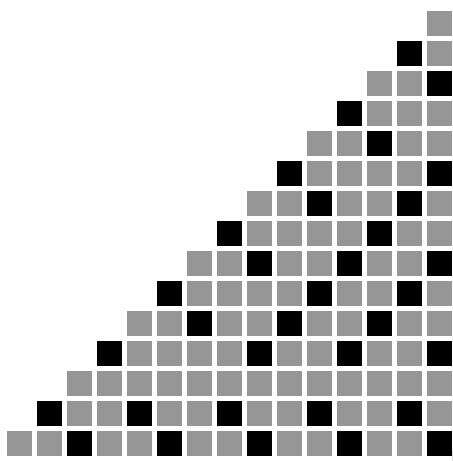} & \includegraphics[scale=0.2]{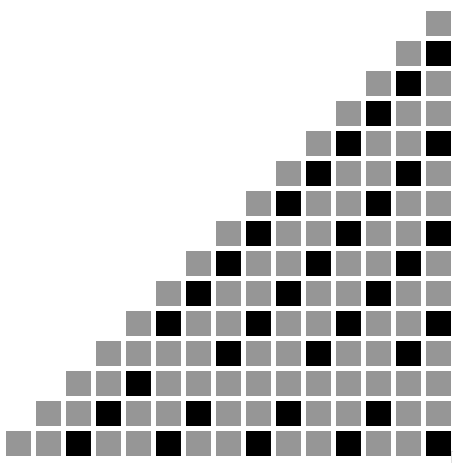}\\
\includegraphics[scale=0.2]{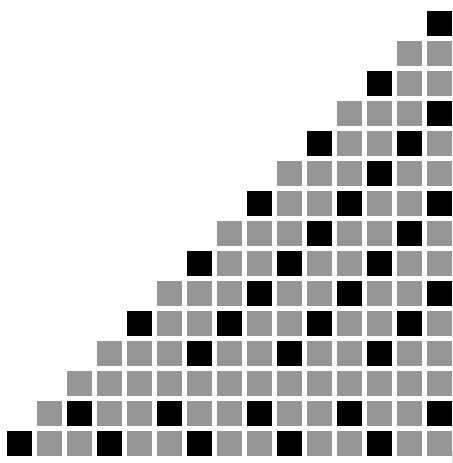} & \includegraphics[scale=0.2]{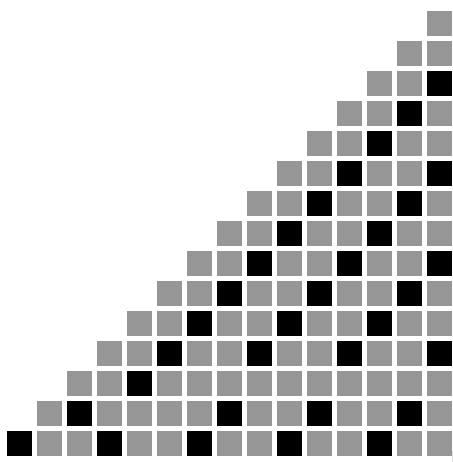} & \includegraphics[scale=0.2]{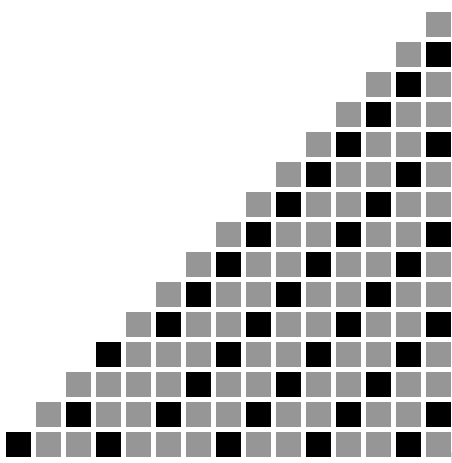} & \includegraphics[scale=0.2]{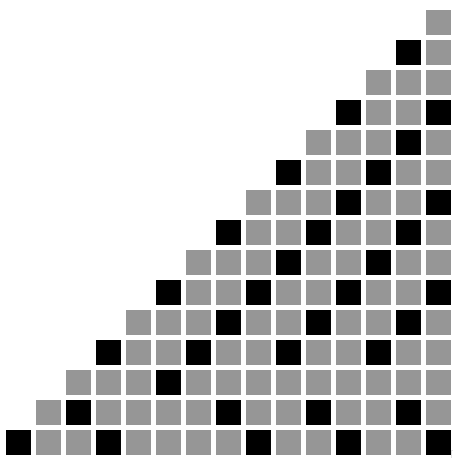}\\
\includegraphics[scale=0.2]{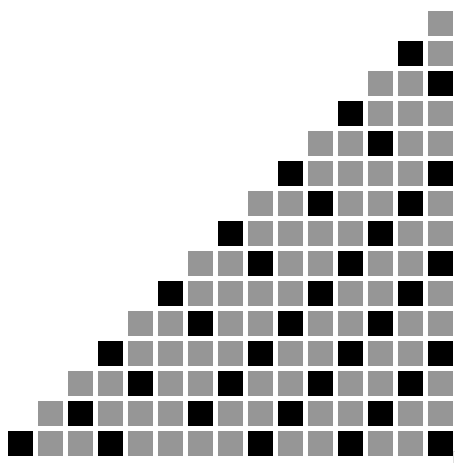} & \includegraphics[scale=0.2]{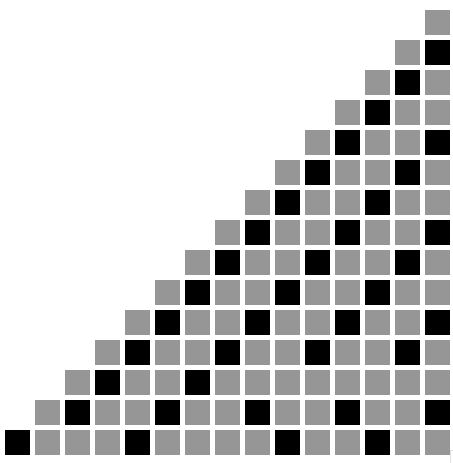} & \includegraphics[scale=0.2]{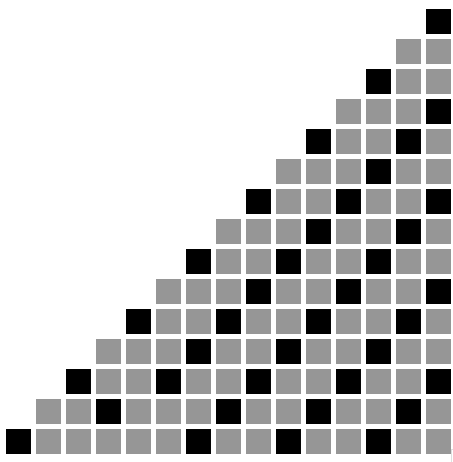} & \includegraphics[scale=0.2]{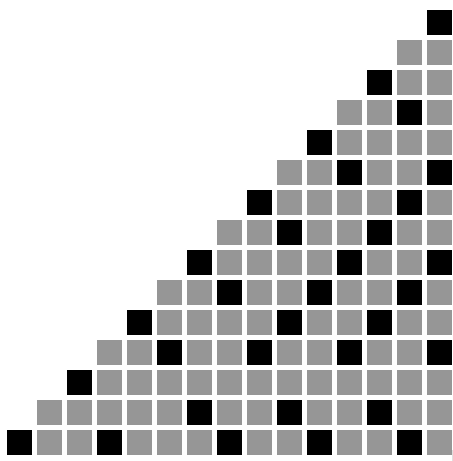}\\
\end{tabular}
\caption{P-positions for 2-Diet Chomp with 3 rows and $a_1$ ranging from 0 to 11}
\label{fig:DMCP-pos}
\end{table}

We can make the following observation from this pictures:

\begin{itemize}
\item Each row eventually becomes periodic with period either 3 or 1.
\item Each diagonal going NE becomes periodic with period either 2 or 1.
\item If we remove the left bottom corner, a few bottom rows and a few top NE diagonals, the pictures would look the same, and the P-positions correspond to values $a_1 +a_3 - a_2 \equiv 1 \pmod 3$.
\end{itemize}

\section{Acknowledgments}

This project was part of the PRIMES STEP program. We are thankful to the program for allowing us the opportunity to conduct this research.

\end{document}